\theoremstyle{definition}
\newtheorem{theorem}{Theorem}[section]
\newtheorem{lemma}[theorem]{Lemma}
\theoremstyle{definition}
\theoremstyle{remark}
\newtheorem{remark}[theorem]{Remark}
\newcommand{\norm}[1]{\left\lVert#1\right\rVert}
\newcommand{\ap}{\left( A^{(1)}, A^{(2)}, \cdots ,A^{(m)} \right)}
\newcommand{\mmp}{\left( M^{(1)}, M^{(2)}, \cdots ,M^{(m)} \right)}
\newcommand{\bp}{\left( B^{(1)}, B^{(2)}, \cdots ,B^{(m)} \right)}
\newcommand{\ep}{\left( E^{(1)}, E^{(2)}, \cdots ,E^{(m)} \right)}
\numberwithin{equation}{section}
\begin{document}

\title[Perturbation Bounds for Joint Spectrum of Tuple of Matrices]{Relative Perturbation Bounds for the Joint Spectrum of Commuting Tuple of Matrices}


\author{Arnab Patra}
\address{Department of mathematics, Indian Institute of Technology Kharagpur, India 721302}
\curraddr{}
\email{arnptr91@gmail.com}
\thanks{}

\author{P. D. Srivastava}
\address{Department of mathematics, Indian Institute of Technology Kharagpur, India 721302}
\curraddr{}
\email{pds@maths.iitkgp.ernet.in}
\thanks{}

\subjclass[2010]{Primary 15A42; Secondary 15A18}

\keywords{Joint eigenvalues, Clifford algebra, commuting tuple of matrices.}

\date{}

\dedicatory{}

\begin{abstract} 
In this paper, we study the relative perturbation bounds for joint eigenvalues of commuting tuples of normal $n \times n$ matrices. Some Hoffman-Wielandt type relative perturbation bounds are proved using the Clifford algebra technique. A result is also extended for diagonalizable matrices which improves a relative perturbation bound for single matrices.
\end{abstract}

\maketitle
\section{\bf Introduction}

Let $A=\left( A^{(1)}, A^{(2)}, \cdots ,A^{(m)} \right)$ be an $m$-tuple commuting $n \times n$ matrices acting on $\mathbb{C}^n.$ A joint eigenvalue of $A$ is an element $\lambda = \left( \lambda^{(1)}, \lambda^{(2)}, \cdots ,\lambda^{(m)} \right) \in \mathbb{C}^m$ such that,
\begin{equation*}
A^{(j)}x = \lambda^{(j)}x \ \ \mbox{for} \ \ j=1,2, \cdots, m
\end{equation*}
holds for some non-zero vector $x \in \mathbb{C}^n.$ The vector $x$ is called joint eigenvector. The set of all joint eigenvalues of $A$ is called the joint spectrum of $A$.

The main concern of perturbation theory of matrix eigenvalues is to estimate the error when the eigenvalues of a matrix are approximated by the eigenvalues of a perturbed matrix. Let $A$ and $B$ are two $n \times n$ matrix and $\{\lambda_1, \lambda_2, \cdots, \lambda_n\}$ and $\{\mu_1, \mu_2, \cdots, \mu_n \}$ are their eigenvalues respectively. An important result in the direction of absolute type perturbation bound is Hoffman-Weildant Theorem \cite{hoffman}. Which states that, if $A$ and $B$ are normal matrices, then there exist a permutation $\pi$ of $\{1,2, \cdots, n\}$ such that,
\begin{equation*}
\left( \sum\limits_{i=1}^{n} \Big| \lambda_i - \mu_{\pi(i)} \Big|^2 \right)^{\frac{1}{2}} \leq \|A-B\|_F
\end{equation*}
where $\|.\|_F$ denotes the Frobenius Norm. Many researchers generalized this result in several directions.  In 1993 Bhatia and Bhattacharyya \cite{bhatia} generalized the Hoffman-Weildant Theorem for joint eigenvalues of $m$-tuple of commuting normal matrices. More results on absolute type perturbation bounds for joint eigenvalues can be found in \cite{freedman, pryde1}.

In 1998, relative perturbation bounds for eigenvalues of diagonalizable matrices were studied by Eisenstat and Ipsen \cite{siam}. It was proved in \cite{siam} that, if $A$ and $B$ both are diagonalizable and $A$ is non-singular, then there exists a permutation $\pi$ of $\{1,2, \cdots, n\}$ such that,
\begin{equation*}
\left( \sum\limits_{i=1}^{n} \Big| \frac{\lambda_i - \mu_{\pi(i)}}{\lambda_i}  \Big|^2 \right)^{\frac{1}{2}} \leq \kappa(X) \kappa(\tilde{X}) \|A^{-1}(A-B)\|_F
\end{equation*}
where $X$ and $\tilde{X}$ are the invertible matrices which diagonalize $A$ and $B$ respectively and $\kappa(X)= \|X\| \|X^{-1}\|$ be the condition number of the matrix $X.$ Furthermore, Li and Sun \cite{li4} obtained bounds for a normal non-singular matrix and a arbitrary matrix and Li and Chen \cite{li3} generalized it for diagonalizable matrices.

In this paper, we have proved some relative perturbation bounds for joint eigenvalues of $m$-tuple of commuting normal and diagonalizable matrices using the Clifford algebra technique proposed by McIntosh and Pryde \cite{pryde}

\section{\bf The Clifford Algebra Technique}
For the convenience of the reader we briefly discuss the Clifford algebra technique. Let $\mathbb{R}^m$ be the real vector space of dimension $m$ and let $e_1, e_2, \cdots, e_m$ be the basis. The Clifford algebra is an algebra generated by $e_1, e_2, \cdots, e_m$ with the following relations
\begin{equation*}
e_i e_j = -e_j e_i \ \ \mbox{for} \ \ i \neq j \ \ \mbox{and} \ \ e_i^2 = -1 \ \ \mbox{for all} \ \ i 
\end{equation*} 
and it is denoted by $\mathbb{R}_{(m)}.$ Then $\mathbb{R}_{(m)}$ is an algebra over $\mathbb{R}$ of dimension $2^m.$ Let $S$ be a subset of $\{1, \cdots, m\}$ such that $S= \{s_1, s_2, \cdots, s_k\}$ with $1 \leq s_1 < s_2 < \cdots < s_k \leq m. $ Then the elements $e_S = e_{s_1} e_{s_2} \cdots e_{s_k}$ form a basis of $\mathbb{R}_{(m)}$ where $S$ runs over the all subsets of $\{1, \cdots, m\}$ with $e_{\phi} = 1.$ An element $\alpha$ of $\mathbb{R}_{(m)}$ is of the form $\alpha = \sum\limits_{S} \alpha_S e_S $ where $\alpha_S \in \mathbb{R}.$ If $\beta = \sum\limits_{S} \beta_S e_S ,$ $\beta_S \in \mathbb{R}$ be another element of $\mathbb{R}_{(m)},$ the inner product of $\alpha$ and $\beta$ is defined as,
\begin{equation*}
\left\langle \alpha, \beta \right\rangle = \sum\limits_{S} \alpha_S \beta_S.
\end{equation*} 
Under this inner product $\mathbb{R}_{(m)}$ becomes an Hilbert space with the orthonormal basis $e_S.$ Now the tensor product $\mathbb{C}^n \otimes \mathbb{R}_{(m)}$ where
\begin{equation*}
\mathbb{C}^n \otimes \mathbb{R}_{(m)} = \left\lbrace \sum\limits_{S} x_S \otimes e_S : x_S \in \mathbb{C}^n \right\rbrace,
\end{equation*}
 is a Hilbert space under the inner product
\begin{equation*}
\left\langle x,y \right\rangle = \left\langle \sum\limits_{S} x_S \otimes e_S, \sum\limits_{S} y_S \otimes e_S \right\rangle = \sum\limits_{S} \left\langle x_S,y_S \right\rangle
\end{equation*}
where $x_S,y_S \in \mathbb{C}^n$ and the inner product in the right hand side be the usual inner product in $\mathbb{C}^n.$ Therefore  the norm on $\mathbb{C}^n \otimes \mathbb{R}_{(m)}$ is defined by,
\begin{equation*}
\norm{ \sum\limits_{S} x_S \otimes e_S } = \left( \sum\limits_{S} \|x_S\|^2 \right)^{\frac{1}{2}}
\end{equation*}
where the norm in the right hand side is the usual norm in $\mathbb{C}^n.$ Let $M_n$ be the space of $n \times n$ matrices of complex entries. Then $M_n \otimes \mathbb{R}_{(m)}$ is a linear space and for $A \in M_n \otimes \mathbb{R}_{(m)},$ $A$ is of the form $A= \sum\limits_{S}A_S \otimes e_S,$ where $A_S \in M_n.$ Each element $A= \sum\limits_{S}A_S \otimes e_S \in M_n \otimes \mathbb{R}_{(m)}$ acts on the elements $x =\sum\limits_{T} x_T \otimes e_T \in \mathbb{C}^n \otimes \mathbb{R}_{(m)}$ by

\begin{equation*}
Ax= \left( \sum\limits_{S}A_S \otimes e_S \right) \left(  \sum\limits_{T} x_T \otimes e_T \right) =  \sum\limits_{S,T}A_S x_T \otimes e_S e_T.
\end{equation*}

For an $m$-tuple $A=\left( A^{(1)}, A^{(2)}, \cdots ,A^{(m)} \right)$ of $n \times n$ complex matrices, the corresponding Clifford operator $Cliff(A) \in M_n \otimes \mathbb{R}_{(m)}$ is defined by,
\begin{equation*}
Cliff(A) = i \sum\limits_{j=1}^{m} A^{(j)} \otimes e_j.
\end{equation*}

\section{\bf Relative perturbation bounds}
Throughout this paper, $S_n$ be the set of all $n!$ permutations of $\{1,2, \cdots, n\}$ and $\|.\|_F,$ $\norm{.}$ denote the Frobenius norm and the usual operator norm respectively. $\Re(z)$ denotes the real part of a complex number $z.$ A square matrix of non-negative real numbers is called a \textit{doubly stochastic} matrix if each row and each column sum is 1. A \textit{permutation matrix} is a square matrix such that every row and column contains exactly one entry 1 and 0 everywhere else. 

 \begin{lemma} (\cite[Lemma 1]{bhatia})
 Let $A= \ap$ be any $m$-tuple of operators in $\mathbb{C}^n$ and let $Cliff(A)$ be the corresponding Clifford operator. Then
 \begin{equation*}
 \|Cliff(A)\|_F^2 = 2^m \sum\limits_{k=1}^m \|A^{(k)}\|_F^2.
 \end{equation*}
 \end{lemma}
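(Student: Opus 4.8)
The plan is to compute the Frobenius norm straight from its definition as the sum of squared norms of the images of an orthonormal basis. Fix an orthonormal basis $v_1, v_2, \ldots, v_n$ of $\mathbb{C}^n$. Since the $e_S$ form an orthonormal basis of $\mathbb{R}_{(m)}$, the family $\{v_k \otimes e_S\}$, with $k$ ranging over $\{1,\ldots,n\}$ and $S$ over all subsets of $\{1,\ldots,m\}$, is an orthonormal basis of $\mathbb{C}^n \otimes \mathbb{R}_{(m)}$, so
\begin{equation*}
\|Cliff(A)\|_F^2 = \sum_{k=1}^{n} \sum_{S} \norm{ Cliff(A)\,(v_k \otimes e_S) }^2 .
\end{equation*}

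The key algebraic observation is that, for each $j$ and each subset $S$, the product $e_j e_S$ equals $\pm e_{S'}$ where $S' = S \,\triangle\, \{j\}$ is the symmetric difference; this follows from the relations $e_i e_j = - e_j e_i$ $(i\neq j)$ and $e_i^2 = -1$ by moving $e_j$ into its sorted position and cancelling a repeated factor when $j \in S$. In particular $\norm{e_j e_S} = 1$, and for a fixed $S$ the map $j \mapsto S \,\triangle\, \{j\}$ is injective, so $e_1 e_S, e_2 e_S, \ldots, e_m e_S$ lie in pairwise distinct coordinate axes of $\mathbb{R}_{(m)}$ and are therefore pairwise orthogonal.

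With this in hand, expand $Cliff(A)\,(v_k \otimes e_S) = i \sum_{j=1}^{m} A^{(j)} v_k \otimes e_j e_S$. Because the $e_j e_S$ sit in mutually orthogonal one-dimensional subspaces of $\mathbb{R}_{(m)}$, the summands are orthogonal in $\mathbb{C}^n \otimes \mathbb{R}_{(m)}$, and using $|i| = 1$ and $\norm{e_j e_S} = 1$ we get
\begin{equation*}
\norm{ Cliff(A)\,(v_k \otimes e_S) }^2 = \sum_{j=1}^{m} \norm{ A^{(j)} v_k \otimes e_j e_S }^2 = \sum_{j=1}^{m} \norm{ A^{(j)} v_k }^2 ,
\end{equation*}
a quantity independent of $S$. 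Summing over the $2^m$ subsets $S$ and over $k$, and using $\sum_{k=1}^{n} \norm{A^{(j)} v_k}^2 = \|A^{(j)}\|_F^2$, yields
\begin{equation*}
\|Cliff(A)\|_F^2 = \sum_{k=1}^{n} \sum_{S} \sum_{j=1}^{m} \norm{ A^{(j)} v_k }^2 = 2^m \sum_{j=1}^{m} \sum_{k=1}^{n} \norm{ A^{(j)} v_k }^2 = 2^m \sum_{j=1}^{m} \|A^{(j)}\|_F^2 .
\end{equation*}
The only delicate point is the sign and permutation bookkeeping for $e_j e_S$; everything else is a routine orthogonality computation, and the signs are immaterial since only the norms $\norm{e_j e_S} = 1$ enter the estimate.
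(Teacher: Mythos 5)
Your proof is correct. Note that the paper does not prove this lemma at all: it is quoted from Bhatia and Bhattacharyya \cite[Lemma 1]{bhatia}, so there is no in-paper argument to compare against. Your route is a direct computation of the Hilbert--Schmidt norm on an orthonormal basis $\{v_k \otimes e_S\}$ of $\mathbb{C}^n \otimes \mathbb{R}_{(m)}$, resting on the observation that $e_j e_S = \pm e_{S \triangle \{j\}}$, so that for fixed $S$ the vectors $e_1 e_S, \ldots, e_m e_S$ are (up to sign) distinct basis elements and hence orthogonal; this makes each $\norm{Cliff(A)(v_k\otimes e_S)}^2$ split as $\sum_j \norm{A^{(j)}v_k}^2$, and the factor $2^m$ appears simply as the number of subsets $S$. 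The argument standard in the literature (and the one mirrored in this paper's own trace computations in Theorem 3.3) instead writes $\|Cliff(A)\|_F^2 = \mathrm{trace}\left(Cliff(A)\,Cliff(A)^*\right) = -\sum_{j,k}\mathrm{trace}\left(A^{(j)}{A^{(k)}}^* \otimes e_j e_k\right)$ and kills the off-diagonal terms with the trace lemma ($\mathrm{trace}(P\otimes e_T)=0$ for $T \neq \phi$, $\mathrm{trace}(P\otimes e_\phi)=2^m\,\mathrm{trace}\,P$), the diagonal terms giving $2^m\sum_k \|A^{(k)}\|_F^2$. Your version is more elementary and self-contained in that it needs no trace identities, only the sign/permutation bookkeeping for $e_j e_S$, which you handle correctly and which is indeed immaterial to the norms; the trace-based version has the advantage of slotting directly into the machinery (Lemma 3.2) that the paper reuses for the cross terms $trace(Cliff(\tilde I)\,Cliff(C)^*)$ later on. Either way the identity holds, and your derivation is sound.
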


 \begin{lemma} (\cite{bhatia})
 If $P$ is any operator of $\mathbb{C}^n$. Then 
 \begin{enumerate}
 \item[(i)] $trace(P \otimes e_T) = 0$ for any non empty subset $T$ of $\{1,2, \cdots, m\}$,
 \item[(ii)] $trace(P \otimes e_\phi) = 2^m~trace{P}.$
 \end{enumerate}
\end{lemma}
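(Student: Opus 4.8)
The plan is to compute both traces directly in the natural orthonormal basis of the Hilbert space $\mathbb{C}^n \otimes \mathbb{R}_{(m)}$, the point being that ``$trace$'' here means the trace of the operator $P\otimes e_T$ acting on this $n\,2^m$-dimensional space, i.e.\ the sum of its diagonal matrix entries (basis-independence being the usual fact about traces). Fix an orthonormal basis $v_1,\dots,v_n$ of $\mathbb{C}^n$. Since the $e_S$, with $S$ ranging over all subsets of $\{1,\dots,m\}$, form an orthonormal basis of $\mathbb{R}_{(m)}$, the vectors $v_i\otimes e_S$ form an orthonormal basis of $\mathbb{C}^n\otimes\mathbb{R}_{(m)}$. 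Because $(P\otimes e_T)(v_i\otimes e_S)=Pv_i\otimes e_Te_S$, the defining inner product gives
\begin{equation*}
trace(P\otimes e_T)=\sum_{i=1}^{n}\sum_{S}\left\langle Pv_i\otimes e_Te_S,\ v_i\otimes e_S\right\rangle=\sum_{i=1}^{n}\sum_{S}\left\langle Pv_i,v_i\right\rangle\left\langle e_Te_S,e_S\right\rangle .
\end{equation*}

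The next step is the elementary multiplication rule in $\mathbb{R}_{(m)}$: using $e_ie_j=-e_je_i$ and $e_i^2=-1$, moving the factors of $e_T$ past those of $e_S$ and cancelling repeated indices yields $e_Te_S=\varepsilon(T,S)\,e_{T\triangle S}$ for some sign $\varepsilon(T,S)\in\{+1,-1\}$, where $T\triangle S$ is the symmetric difference. Hence $\left\langle e_Te_S,e_S\right\rangle=\varepsilon(T,S)\,\langle e_{T\triangle S},e_S\rangle$, and this is $0$ unless $T\triangle S=S$, that is, unless $T=\phi$.

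For (i), if $T\neq\phi$ then every term in the double sum above vanishes, so $trace(P\otimes e_T)=0$. For (ii), if $T=\phi$ then $e_\phi e_S=e_S$, so $\varepsilon(\phi,S)=1$ and $\langle e_\phi e_S,e_S\rangle=1$ for each of the $2^m$ subsets $S$; the double sum therefore collapses to
\begin{equation*}
trace(P\otimes e_\phi)=\sum_{S}\sum_{i=1}^{n}\left\langle Pv_i,v_i\right\rangle=2^m\sum_{i=1}^{n}\left\langle Pv_i,v_i\right\rangle=2^m\,trace(P).
\end{equation*}

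There is no genuine obstacle in this argument. The only thing demanding any care is the sign $\varepsilon(T,S)$ in the Clifford product, and it turns out to be harmless: in case (i) it multiplies a vanishing inner product, and in case (ii) (where $T=\phi$) it equals $+1$. Everything else is a routine trace computation once the orthonormal basis $\{v_i\otimes e_S\}$ is fixed.
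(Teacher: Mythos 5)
Your proof is correct: computing the trace of $P\otimes e_T$ over the orthonormal basis $\{v_i\otimes e_S\}$ of $\mathbb{C}^n\otimes\mathbb{R}_{(m)}$, using $e_Te_S=\pm e_{T\triangle S}$ and the orthonormality of the $e_S$, is exactly the standard argument, and your handling of the sign (irrelevant unless $T=\phi$, where it is $+1$) is right. The paper itself offers no proof, merely citing Bhatia and Bhattacharyya, and your direct verification is essentially the argument found there, so there is nothing to fault.
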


 Let $A=\ap$ and $B=\bp$ are two $m$-tuple of normal $n \times n$ matrices such that $B=A+E,$ where $E = \ep$ is the perturbation given to $A.$ Also let $\alpha_i = \left( \alpha_i^{(1)}, \alpha_i^{(2)}, \cdots ,\alpha_i^{(m)} \right)$  and $\beta_i = \left( \beta_i^{(1)}, \beta_i^{(2)}, \cdots ,\beta_i^{(m)} \right)$ are the joint eigenvalues of $A$ and $B$ respectively.

\begin{theorem}\label{th2.1}
If $A$ and $B= A+E$ are $m$-tuple of normal matrices as mentioned above and for $k=1,2, \cdots m,$ each $A^{(k)}$ is non-singular, then there exists a permutation $\pi$ of $S_n$ such that
\begin{equation*}
\sum\limits_{j=1}^n \sum\limits_{k=1}^m \left| \frac{\alpha_j^{(k)} - \beta_{\pi(j)}^{(k)}}{\alpha_j^{(k)}} \right|^2 \leq \sum\limits_{k=1}^{m} \norm{{A^{(k)}}^{-1}E^{(k)}}_F^2.
\end{equation*}
\end{theorem}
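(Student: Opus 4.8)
The plan is to mimic the Hoffman–Wielandt/Bhatia–Bhattacharyya strategy, but applied to a cleverly chosen pair of Clifford operators so that the "relative" weights $1/\alpha_j^{(k)}$ appear automatically. Since each $A^{(k)}$ is normal and non-singular and the $A^{(k)}$ commute, they are simultaneously unitarily diagonalizable: there is a unitary $U$ with $U^* A^{(k)} U = D_A^{(k)} = \operatorname{diag}(\alpha_1^{(k)},\dots,\alpha_n^{(k)})$ for all $k$. Likewise $U^* B^{(k)} U = \tilde D^{(k)}$ are the (simultaneously diagonalizable) images of the $B^{(k)}$, with joint eigenvalues $\beta_i$ on a common eigenbasis given by some unitary $V$, i.e. $V^* B^{(k)} V = D_B^{(k)} = \operatorname{diag}(\beta_1^{(k)},\dots,\beta_n^{(k)})$. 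The key object to look at is the tuple $C = \left( {A^{(1)}}^{-1}, \dots, {A^{(m)}}^{-1}\right)$ paired against $\left( {A^{(1)}}^{-1}B^{(1)}, \dots, {A^{(m)}}^{-1}B^{(m)}\right) = \left( I + {A^{(1)}}^{-1}E^{(1)}, \dots\right)$; the difference of the corresponding Clifford operators is $\operatorname{Cliff}\left( {A^{(1)}}^{-1}E^{(1)}, \dots, {A^{(m)}}^{-1}E^{(m)}\right)$, whose squared Frobenius norm is, by Lemma (the Bhatia norm identity), exactly $2^m \sum_k \norm{{A^{(k)}}^{-1}E^{(k)}}_F^2$.

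The heart of the argument is then a Hoffman–Wielandt computation carried out inside $M_n \otimes \mathbb{R}_{(m)}$. I would write $\operatorname{Cliff}({A^{(k)}}^{-1}E^{(k)}) = \operatorname{Cliff}({A^{(k)}}^{-1}B^{(k)}) - \operatorname{Cliff}({A^{(k)}}^{-1})$ and expand the Frobenius norm of the difference as
\begin{equation*}
\norm{\operatorname{Cliff}({A^{(k)}}^{-1}B^{(k)})}_F^2 - 2\,\Re\,\langle \operatorname{Cliff}({A^{(k)}}^{-1}B^{(k)}), \operatorname{Cliff}({A^{(k)}}^{-1})\rangle + \norm{\operatorname{Cliff}({A^{(k)}}^{-1})}_F^2,
\end{equation*}
using $\langle P, Q\rangle_F = \operatorname{trace}(Q^* P)$ together with Lemma 3.2 to discard the $e_T$-components with $T \neq \phi$ when taking traces. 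The two outer terms are expressed via the eigenvalues: $\norm{\operatorname{Cliff}({A^{(k)}}^{-1})}_F^2 = 2^m \sum_{j,k} |1/\alpha_j^{(k)}|^2$, and similarly the $B$-term produces $2^m\sum_{j,k}|\beta_j^{(k)}/\gamma_j^{(k)}|^2$-type sums where one must be careful: ${A^{(k)}}^{-1}B^{(k)}$ need not be normal, so instead I would keep everything as traces of products and feed the cross term into a doubly-stochastic-matrix argument. Concretely, inserting the unitaries $U$ (diagonalizing the $A^{(k)}$ and $A^{(k)-1}$) and $V$ (diagonalizing the $B^{(k)}$), the cross term becomes $\Re \sum_{j,l} \overline{(1/\alpha_j^{(k)})}\,\beta_l^{(k)}\, |w_{jl}|^2$ summed over $k$, where $W = U^*V$ is unitary and hence $(|w_{jl}|^2)$ is doubly stochastic. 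Collecting terms, $\sum_k \norm{{A^{(k)}}^{-1}E^{(k)}}_F^2$ is bounded below by a quantity of the form $\sum_{j,l} |w_{jl}|^2 \, d_{jl}$ where $d_{jl} = \sum_k |1/\alpha_j^{(k)} - \beta_l^{(k)}/? |^2$; getting this bookkeeping to land exactly on $\sum_k |(\alpha_j^{(k)} - \beta_l^{(k)})/\alpha_j^{(k)}|^2$ is the step that needs the non-singularity of every $A^{(k)}$ and some algebraic massaging of the cross term (writing $1/\alpha_j^{(k)} - \beta_l^{(k)}\cdot\overline{(1/\alpha_j^{(k)})}\cdot$ … ). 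This identification is where I expect the main friction.

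Once the inequality $\sum_k \norm{{A^{(k)}}^{-1}E^{(k)}}_F^2 \ge \sum_{j,l} p_{jl} \, d_{jl}$ is in hand with $P = (p_{jl})$ doubly stochastic and $d_{jl} = \sum_k |(\alpha_j^{(k)} - \beta_l^{(k)})/\alpha_j^{(k)}|^2 \ge 0$, the proof finishes with Birkhoff's theorem: the doubly stochastic $P$ lies in the convex hull of permutation matrices, so the linear functional $P \mapsto \sum_{j,l} p_{jl} d_{jl}$ attains its minimum over that polytope at a vertex, i.e. at some permutation matrix corresponding to a permutation $\pi \in S_n$. Hence $\sum_k \norm{{A^{(k)}}^{-1}E^{(k)}}_F^2 \ge \min_\pi \sum_j d_{j\,\pi(j)} = \sum_j \sum_k |(\alpha_j^{(k)} - \beta_{\pi(j)}^{(k)})/\alpha_j^{(k)}|^2$ for that $\pi$, which is the claimed bound. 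The two places to be careful are: (a) justifying the trace manipulations in $M_n\otimes\mathbb{R}_{(m)}$ — these are exactly what Lemma 3.2 is for, and the commutativity of the tuple guarantees $\operatorname{Cliff}$ of the diagonalized tuple is again a sum of diagonal blocks; and (b) the algebraic step turning the mixed trace term into the relative-difference form, which I anticipate is the crux and the reason the hypothesis requires \emph{every} $A^{(k)}$ invertible rather than just one.
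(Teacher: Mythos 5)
Your overall architecture is the same as the paper's: form the Clifford operator of the tuple $C=\left({A^{(1)}}^{-1}B^{(1)},\cdots,{A^{(m)}}^{-1}B^{(m)}\right)$, expand a Frobenius norm using the trace lemma to kill the $e_T$-components with $T\neq\phi$, observe that $w_{jl}=trace(P_jQ_l)=|\langle u_j,v_l\rangle|^2$ is doubly stochastic, and finish with Birkhoff. But there is a genuine error at the decomposition step, and it is precisely the step you flag as the ``main friction.'' You write $\operatorname{Cliff}\left({A^{(k)}}^{-1}E^{(k)}\right)=\operatorname{Cliff}\left({A^{(k)}}^{-1}B^{(k)}\right)-\operatorname{Cliff}\left({A^{(k)}}^{-1}\right)$; this is false, because ${A^{(k)}}^{-1}E^{(k)}={A^{(k)}}^{-1}B^{(k)}-I$, not ${A^{(k)}}^{-1}B^{(k)}-{A^{(k)}}^{-1}$. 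The correct pairing, which is what the paper uses, is $C$ against the tuple of identities $\tilde I=(I,\cdots,I)$, so that $\operatorname{Cliff}(D)=\operatorname{Cliff}(C)-\operatorname{Cliff}(\tilde I)$ with $D=\left({A^{(1)}}^{-1}E^{(1)},\cdots,{A^{(m)}}^{-1}E^{(m)}\right)$. With your pairing the expansion bounds $\sum_k\norm{{A^{(k)}}^{-1}\left(B^{(k)}-I\right)}_F^2$ from below, and no algebraic massaging of the cross term will produce the relative differences $\left|\left(\alpha_j^{(k)}-\beta_l^{(k)}\right)/\alpha_j^{(k)}\right|^2$; indeed the cross term you actually wrote, $\Re\sum_{j,l}\overline{\left(1/\alpha_j^{(k)}\right)}\beta_l^{(k)}|w_{jl}|^2$, is the one belonging to the pairing with $\tilde I$, not with $\left({A^{(k)}}^{-1}\right)$, so the write-up is also internally inconsistent.

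With the correct pairing the bookkeeping closes with no friction: using $A^{(k)}=\sum_j\alpha_j^{(k)}P_j$, $B^{(k)}=\sum_l\beta_l^{(k)}Q_l$ (normality of $A^{(k)}$ and $B^{(k)}$ suffices; ${A^{(k)}}^{-1}B^{(k)}$ need not be normal, as you noted) one gets $\norm{\operatorname{Cliff}(C)}_F^2=2^m\sum_k\sum_{j,l}w_{jl}\left|\beta_l^{(k)}/\alpha_j^{(k)}\right|^2$, $\norm{\operatorname{Cliff}(\tilde I)}_F^2=2^mmn=2^m\sum_k\sum_{j,l}w_{jl}$, and cross term $2\Re\,trace\left(\operatorname{Cliff}(C)^*\operatorname{Cliff}(\tilde I)\right)=2^{m+1}\sum_k\sum_{j,l}w_{jl}\Re\left(\overline{\beta_l^{(k)}/\alpha_j^{(k)}}\right)$; the three pieces assemble into $2^m\sum_{j,l}w_{jl}\sum_k\left|1-\beta_l^{(k)}/\alpha_j^{(k)}\right|^2$, which is exactly $2^m\sum_{j,l}w_{jl}\,d_{jl}$ with your $d_{jl}$, while the left side equals $2^m\sum_k\norm{{A^{(k)}}^{-1}E^{(k)}}_F^2$ by the norm identity for Clifford operators. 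Birkhoff then finishes as you describe (that part of your argument is fine). Note also that invertibility of every $A^{(k)}$ is needed only to form ${A^{(k)}}^{-1}$ and divide by each $\alpha_j^{(k)}$, not for any hidden cancellation. So: right strategy, but the decomposition you chose is wrong, and the unresolved identification step cannot go through as you set it up.
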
 
 
\begin{proof} Since $E=B-A$ therefore
\begin{equation}\label{eq2.002}
 E^{(k)} = B^{(k)} - A^{(k)} \Rightarrow {A^{(k)}}^{-1} E^{(k)} =  {A^{(k)}}^{-1} B^{(k)} - I, 
\end{equation}
for $k=1,2, \cdots m.$

Let $C = \left({A^{(1)}}^{-1} B^{(1)}, \cdots , {A^{(m)}}^{-1} B^{(m)} \right)$ , $D = \left( {A^{(1)}}^{-1} E^{(1)}, \cdots , {A^{(m)}}^{-1} E^{(m)} \right).$ Also let $\tilde{I} = (I,I, \cdots,I)$ be the $m$-tuple of identity matrices of order $n \times n.$ Then from (\ref{eq2.002}) we have
\begin{equation} \label{eq2.003}
C - \tilde{I} = D.
\end{equation}
We can choose orthonormal bases $\{u_1, u_2,\cdots, u_n\}$ and $\{v_1,v_2, \cdots, v_n\}$ of $\mathbb{C}^n$ such that,
\begin{equation*}
A^{(k)}u_j = \alpha_j^{(k)}u_j, \ \ \ \ \  B^{(k)}v_j = \beta_j^{(k)}v_j, \ \ \ \mbox{for} \ \  j=1,2, \cdots, n \ \ \mbox{and} \ \  k=1,2, \cdots, m.
\end{equation*}
Let $P_j$ and $Q_j$ denotes the orthogonal projection operator to spaces spanned by $u_j$ and $v_j$ respectively. Then for $k=1,2, \cdots, m,$
\begin{equation*}
A^{(k)}=\sum\limits_{j=1}^{n} \alpha_j^{(k)}P_j, \ \ \
B^{(k)}=\sum\limits_{l=1}^{n} \beta_l^{(k)}Q_l.
\end{equation*}
Now,
\begin{eqnarray*}
Cliff(C)&=& i\sum\limits_{k=1}^m {A^{(k)}}^{-1}B^{(k)} \otimes e_k\\
         &=& i\sum\limits_{k=1}^m \left(\sum\limits_{j=1}^n \sum\limits_{l=1}^n {\alpha_j^{(k)}}^{-1}P_j \beta_l^{(k)}Q_l \right) \otimes e_k\\
         &=& i \sum\limits_{j,l=1}^n \left(\sum\limits_{k=1}^m {\alpha_j^{(k)}}^{-1} \beta_l^{(k)}I \otimes e_k \right) (P_jQ_l \otimes e_{\phi}). 
\end{eqnarray*}
Similarly
\begin{equation*}
Cliff(\tilde{I})= i \sum\limits_{r=1}^n \left(\sum\limits_{t=1}^m  I \otimes e_t \right) (Q_r \otimes e_{\phi}).
\end{equation*}

\begin{eqnarray*}\label{ar1}
\mbox{Now} \quad & & trace  \left[Cliff(\tilde{I})  Cliff(C)^*  \right]\nonumber \\
 &=& - trace \left[ \sum\limits_{j,l,r=1}^n \left(\sum\limits_{t=1}^m I \otimes e_t \right) \left( \sum\limits_{k=1}^m  \overline{{\alpha_j^{(k)}}^{-1}\beta_l^{(k)}} I \otimes e_k  \right) (Q_r \otimes e_{\phi})  (Q_l P_j  \otimes e_{\phi}) \right]\nonumber \\
 &=& - trace \left[ \sum\limits_{j,l,r=1}^n \left(\sum\limits_{t=1}^m I \otimes e_t \right) \left( \sum\limits_{k=1}^m  \overline{{\alpha_j^{(k)}}^{-1}\beta_l^{(k)}} I \otimes e_k  \right)  (Q_rQ_l P_j \otimes e_{\phi}) \right]\nonumber \\
 &=& - trace \sum\limits_{j,l,r=1}^n  \left[ -\left( \sum\limits_{k=1}^m \overline{{\alpha_j^{(k)}}^{-1}\beta_l^{(k)}} \right) (Q_r Q_lP_j \otimes e_{\phi})\right]\nonumber\\
 && - trace \sum\limits_{j,l,r=1}^n \left[\sum\limits_{\substack{k,t=1\\k \neq t}}^m \left( \overline{{\alpha_j^{(k)}}^{-1}\beta_l^{(k)}} -  \overline{{\alpha_j^{(t)}}^{-1}\beta_l^{(t)}} \right)(Q_r Q_lP_j \otimes e_t e_k)  \right]\nonumber\\
 &=&  \sum\limits_{j,l,r=1}^n \sum\limits_{k=1}^m \overline{{\alpha_j^{(k)}}^{-1}\beta_l^{(k)}}   trace (Q_r Q_lP_j \otimes e_{\phi})\\
 &=& 2^m \sum\limits_{j,l=1}^n \sum\limits_{k=1}^m \overline{{\alpha_j^{(k)}}^{-1}\beta_l^{(k)}}  trace ( Q_lP_j) = 2^m \sum\limits_{j,l=1}^n \sum\limits_{k=1}^m \overline{{\alpha_j^{(k)}}^{-1}\beta_l^{(k)}}  trace ( P_jQ_l).
\end{eqnarray*}

\begin{eqnarray*}\label{ar2}
\mbox{Also,} ~~ \|Cliff(C)\|_F^2 &=& 2^m \sum\limits_{k=1}^m \|{A^{(k)}}^{-1}B^{(k)}\|_F^2 \nonumber\\
				  &=& 2^m \sum\limits_{k=1}^m \norm{\sum\limits_{j=1}^{n} {\alpha_j^{(k)}}^{-1}P_j \sum\limits_{l=1}^{n} \beta_l^{(k)}Q_l}_F^2 \nonumber\\
				  &=& 2^m \sum\limits_{k=1}^m \norm{\sum\limits_{j,l=1}^{n} {\alpha_j^{(k)}}^{-1} \beta_l^{(k)} P_j Q_l}_F^2 \nonumber\\
				  &=& 2^m \sum\limits_{k=1}^m trace \left[ \left( \sum\limits_{j,l=1}^{n} {\alpha_j^{(k)}}^{-1} \beta_l^{(k)} P_j Q_l \right) \left( \sum\limits_{r,t=1}^{n} {\alpha_r^{(k)}}^{-1} \beta_t^{(k)} P_r Q_t \right)^* \right] \nonumber\\
				 &=& 2^m \sum\limits_{k=1}^m trace \left[  \sum\limits_{j,l,r=1}^{n} {\alpha_j^{(k)}}^{-1} \beta_l^{(k)} \overline{{\alpha_r^{(k)}}^{-1} \beta_l^{(k)}} P_j Q_lP_r \right]\nonumber\\
				&=& 2^m \sum\limits_{k=1}^m  \sum\limits_{j,l,r=1}^{n} {\alpha_j^{(k)}}^{-1} \beta_l^{(k)} \overline{{\alpha_r^{(k)}}^{-1} \beta_l^{(k)}} trace( P_j Q_lP_r) \nonumber\\
				 &=& 2^m \sum\limits_{k=1}^m \sum\limits_{j,l=1}^{n} \Big|{\alpha_j^{(k)}}^{-1} \beta_l^{(k)}  \Big|^2 trace (P_j Q_l).
\end{eqnarray*}

Let $W=(w_{ij})$ where $w_{ij}= trace( P_i Q_j).$ It can be easily verified that, $W$ is a doubly stochastic matrix. Hence by well-known Birkhoff's Theorem $W$ is convex combination of permutation matrices. Therefore
\begin{equation*}
W= \sum\limits_{s=1}^{n!} t_s P_s,~~ t_s \geq 0,~~ \sum\limits_{s=1}^{n!} t_s = 1
\end{equation*}
where $P_s$ are the permutation matrices and let $\pi_s$ be the corresponding permutation. Finally from (\ref{eq2.003}) we have,
\begin{eqnarray*}
\|Cliff(D)\|_F^2 &=& \|Cliff(C - \tilde{I})\|_F^2\\
				 &=& \|Cliff(C)\|_F^2 + \|Cliff(\tilde{I})\|_F^2 - 2 \Re \left(trace ~~( Cliff(C)^* Cliff(\tilde{I}))\right)\\
				 &=& 2^m \sum\limits_{s=1}^{n!} t_s \sum\limits_{k=1}^m \sum\limits_{j=1}^n \left[1 +  \Big|{\alpha_j^{(k)}}^{-1} \beta_{\pi_s(j)}^{(k)}  \Big|^2 - 2 \Re \left(\overline{{\alpha_j^{(k)}}^{-1} \beta_{\pi_s(j)}^{(k)} }\right)  \right]\\
				 &\geq & 2^m  \min\limits_{s} \sum\limits_{k=1}^m \sum\limits_{j=1}^n \left[1 +  \Big|{\alpha_j^{(k)}}^{-1} \beta_{\pi_s(j)}^{(k)}  \Big|^2 - 2 \Re \left(\overline{{\alpha_j^{(k)}}^{-1} \beta_{\pi_s(j)}^{(k)} }\right)  \right] \\
				 &=& 2^m \sum\limits_{j=1}^n \sum\limits_{k=1}^m \left| \frac{\alpha_j^{(k)} - \beta_{\pi(j)}^{(k)}}{\alpha_j^{(k)}} \right|^2.
\end{eqnarray*}
Hence the result is proved.
\end{proof}

\begin{remark}
Sun \cite{sun} has generalized the Hoffman-Wielandt inequality for the case when one matrix is normal and other is arbitrary. Similarly, Theorem \ref{th2.1} can be extended when one tuple of matrices are arbitrary. Let $A=\ap$ and $B=\bp$ are two $m$-tuples of commuting matrices in $M_n$ with joint eigenvalues $\alpha_i = \left( \alpha_i^{(1)}, \alpha_i^{(2)}, \cdots ,\alpha_i^{(m)} \right)$  and $\beta_i = \left( \beta_i^{(1)}, \beta_i^{(2)}, \cdots ,\beta_i^{(m)} \right)$ such that each $A^{(k)}$ is normal and non-singular. Since $B^{(1)}, B^{(2)}, \cdots, B^{(m)}$ are commuting so they can be reduced to upper triangular form by a single unitary matrix. Then Using Theorem \ref{th2.1} and the proof of Theorem 1.1 of \cite{sun} it can be established that, if $A$, $B(=A+E)$ are two $m$-tuple of commuting matrices as mentioned above, then there exists a permutation $\sigma$ in $S_n$ such that
\begin{equation*}
\sum\limits_{j=1}^n \sum\limits_{k=1}^m \left| \frac{\alpha_j^{(k)} - \beta_{\sigma(j)}^{(k)}}{\alpha_j^{(k)}} \right|^2 \leq n \sum\limits_{k=1}^{m} \norm{{A^{(k)}}^{-1}}^2 \norm{E^{(k)}}_F^2.
\end{equation*}
When we relax the normality condition on each $B^{(k)},$ the constant $n$ appears on the right hand side of the above inequality. This constant $n$ is best possible. We can verify it by considering the following $n \times n$ matrices
\[A^{(k)}=
\begin{bmatrix}
0      & k      & 0      & \cdots & 0     \\
0      & 0      & k      & \cdots & 0      \\
\vdots & \vdots & \vdots &  & \vdots \\
0      & 0      & 0      & \cdots & k      \\
k      & 0      & 0      & \cdots & 0      
\end{bmatrix}, \quad
B^{(k)}=
\begin{bmatrix}
0      & k      & 0      & \cdots & 0     \\
0      & 0      & k      & \cdots & 0      \\
\vdots & \vdots & \vdots &  & \vdots \\
0      & 0      & 0      & \cdots & k      \\
0      & 0      & 0      & \cdots & 0      
\end{bmatrix} 
\]
where $k$ runs over $1,2, \cdots, m.$
\end{remark}

Now we prove the following diagonalizable analogue of Theorem \ref{th2.1}.

\begin{theorem}\label{th2.2}
If $A$ and $B$ are $m$-tuple of diagonalizable matrices and for $k=1,2, \cdots m,$ each $A^{(k)}$ is non-singular, then there exists a permutation $\pi$ of $S_n$ such that
\begin{equation*}
\sum\limits_{j=1}^n \sum\limits_{k=1}^m \left| \frac{\alpha_j^{(k)} - \beta_{\pi(j)}^{(k)}}{\alpha_j^{(k)}} \right|^2 \leq \kappa(P)^2 \kappa(Q)^2 \sum\limits_{k=1}^{m} \norm{{A^{(k)}}^{-1} \left( B^{(k)} - A^{(k)} \right)}_F^2,
\end{equation*}
\end{theorem}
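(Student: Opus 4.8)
The plan is to mimic the proof of Theorem \ref{th2.1}, replacing the spectral (orthonormal) decompositions of the normal matrices by eigen-decompositions coming from the diagonalizing matrices $P$ and $Q$, and then to control the resulting weight matrix, which is no longer doubly stochastic, by factoring it and using submultiplicativity of norms together with Birkhoff's theorem applied to an auxiliary honest doubly stochastic matrix. Concretely, write $A^{(k)} = P \Lambda^{(k)} P^{-1}$ and $B^{(k)} = Q \Gamma^{(k)} Q^{-1}$ with $\Lambda^{(k)} = \mathrm{diag}(\alpha_1^{(k)}, \dots, \alpha_n^{(k)})$ and $\Gamma^{(k)} = \mathrm{diag}(\beta_1^{(k)}, \dots, \beta_n^{(k)})$, and set $D = \acp^{-1}$-style tuple $\left( {A^{(1)}}^{-1}(B^{(1)} - A^{(1)}), \dots \right)$ as before, so that $\mathrm{Cliff}(D) = \mathrm{Cliff}(C) - \mathrm{Cliff}(\tilde I)$ where $C = \left( {A^{(1)}}^{-1}B^{(1)}, \dots, {A^{(m)}}^{-1}B^{(m)} \right)$. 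The point is that $\|\mathrm{Cliff}(D)\|_F$ is exactly what we want to bound from above, so the goal is to bound $\sum_{j,k} \left| (\alpha_j^{(k)} - \beta_{\pi(j)}^{(k)})/\alpha_j^{(k)} \right|^2$ from \emph{below} by (a constant times) $\|\mathrm{Cliff}(D)\|_F^2$.

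First I would compute, for the single matrix difference, ${A^{(k)}}^{-1}(B^{(k)} - A^{(k)}) = P^{-1 \, *}\!\big(\text{\dots}\big)$; more usefully, introduce $R = P^{-1} Q$, so that $P^{-1} {A^{(k)}}^{-1}(B^{(k)} - A^{(k)}) Q = (\Lambda^{(k)})^{-1} R \Gamma^{(k)} - R$ has $(i,j)$ entry $R_{ij}\big(\beta_j^{(k)}/\alpha_i^{(k)} - 1\big) = R_{ij}\,(\beta_j^{(k)} - \alpha_i^{(k)})/\alpha_i^{(k)}$. Hence
\begin{equation*}
\sum_{k=1}^m \norm{P^{-1} {A^{(k)}}^{-1}(B^{(k)} - A^{(k)}) Q}_F^2 = \sum_{i,j=1}^n |R_{ij}|^2 \sum_{k=1}^m \left| \frac{\alpha_i^{(k)} - \beta_j^{(k)}}{\alpha_i^{(k)}} \right|^2.
\end{equation*}
The left side is at most $\kappa(P)^2 \kappa(Q)^2 \sum_k \norm{{A^{(k)}}^{-1}(B^{(k)} - A^{(k)})}_F^2$ divided by nothing awkward --- actually one must be careful: $\norm{P^{-1} X Q}_F \le \norm{P^{-1}}\,\norm{X}_F\,\norm{Q}$, which brings in $\norm{P^{-1}}^2 \norm{Q}^2$, not the full condition numbers, so I would instead arrange the factors so that the \emph{lower} bound on the right-hand matrix $R$ carries the condition numbers. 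The cleaner route: the matrix $\widehat W = (|R_{ij}|^2)$ satisfies $R^{-1} = Q^{-1} P$, and one shows $\sum_j |R_{ij}|^2 \ge 1/\norm{R^{-1}}^2 \cdot(\text{something})$ is the wrong direction too. So the correct bookkeeping is: choose the permutation $\pi$ to be the one selected by Birkhoff applied to the genuinely doubly stochastic matrix built from a balanced pair of orthonormal eigenbases obtained via the singular value / QR adjustment of $P$ and $Q$; but since $A,B$ need not be normal there is no canonical orthonormal eigenbasis, which is precisely the obstacle.

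The main obstacle, then, is that replacing orthonormal projections $P_j, Q_l$ by oblique spectral projections destroys both the doubly-stochasticity of $W = (\mathrm{trace}(P_i Q_j))$ and the clean identity $\|\mathrm{Cliff}(C)\|_F^2 = 2^m \sum_{j,l,k} |\alpha_j^{(k)\,-1}\beta_l^{(k)}|^2\,\mathrm{trace}(P_j Q_l)$. I would handle this the way Eisenstat--Ipsen and Li--Chen do: conjugate everything by $P$ on one side and $Q$ on the other to reduce to a pair of \emph{normal} (indeed diagonal) tuples $\Lambda = (\Lambda^{(1)},\dots,\Lambda^{(m)})$ and $\Gamma = (\Gamma^{(1)},\dots,\Gamma^{(m)})$, apply Theorem \ref{th2.1} (or rather its proof) to $\Lambda$ and the normal tuple obtained by unitarily adjusting $\Gamma$, and absorb the conjugation cost $\norm{P}\norm{P^{-1}}\norm{Q}\norm{Q^{-1}} = \kappa(P)\kappa(Q)$ into the bound. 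Precisely, I expect the estimate to come out as: there is $\pi \in S_n$ with
\begin{equation*}
\sum_{j=1}^n \sum_{k=1}^m \left| \frac{\alpha_j^{(k)} - \beta_{\pi(j)}^{(k)}}{\alpha_j^{(k)}} \right|^2 \le \kappa(P)^2 \kappa(Q)^2 \sum_{i,j=1}^n |R_{ij}|^2 \sum_{k=1}^m \left| \frac{\alpha_i^{(k)} - \beta_j^{(k)}}{\alpha_i^{(k)}} \right|^2 \cdot \frac{1}{\text{(doubly stochastic average)}},
\end{equation*}
where the last factor is handled by noticing that the normalized matrix $\big(|R_{ij}|^2 / \text{(row/col sums)}\big)$ feeds Birkhoff's theorem. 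The routine verification that $\kappa(P)^2\kappa(Q)^2$ is exactly the constant that emerges --- by bounding $\norm{P^{-1}XQ}_F \le \kappa(P)\kappa(Q)\norm{X}_F$ after inserting $P^{-1}P$ and $QQ^{-1}$ appropriately, combined with $\norm{Y}_F \le \kappa(P)\kappa(Q)\norm{PYQ^{-1}}_F$ in the reverse step --- is the calculation I would not grind through here, but it is the crux of matching the single-matrix bound of \cite{siam} and improving it (the improvement being that the Frobenius norm sits on $A^{(k)\,-1}(B^{(k)}-A^{(k)})$ rather than being split).
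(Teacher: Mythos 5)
Your algebraic setup is correct as far as it goes: with $A^{(k)}=P\Lambda^{(k)}P^{-1}$, $B^{(k)}=Q\Gamma^{(k)}Q^{-1}$ and $R=P^{-1}Q$, the identity $\sum_{k}\norm{P^{-1}{A^{(k)}}^{-1}(B^{(k)}-A^{(k)})Q}_F^2=\sum_{i,j}|R_{ij}|^2\sum_{k}\bigl|(\alpha_i^{(k)}-\beta_j^{(k)})/\alpha_i^{(k)}\bigr|^2$ holds, and $\norm{P^{-1}XQ}_F\le\norm{P^{-1}}\,\norm{Q}\,\norm{X}_F$ gives one of the two factors of the condition numbers. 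But the proof never closes. To finish along your weight-matrix route you would need a precise lemma (of Elsner--Friedland type) saying that $(|R_{ij}|^2)$ dominates, entrywise, $\sigma_{\min}(R)^2$ times some doubly stochastic matrix -- equivalently, $\min_{\pi}\sum_j d_{j\pi(j)}\le\sigma_{\min}(R)^{-2}\sum_{i,j}|R_{ij}|^2 d_{ij}$ for all nonnegative weights -- which, together with $\norm{R^{-1}}\le\norm{Q^{-1}}\norm{P}$, would indeed produce $\kappa(P)^2\kappa(Q)^2$. You neither state nor prove (nor cite) such a lemma; your final display instead contains an undefined factor ``$1/(\text{doubly stochastic average})$'' and you explicitly decline to carry out what you yourself call the crux. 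Your alternative suggestion, to ``conjugate and absorb the cost'' and then apply Theorem \ref{th2.1} to the diagonal tuples $\Lambda$ and $\Gamma$, does not work as stated: that application only compares $\alpha_j$ with $\beta_j$ in the given ordering, and ${A^{(k)}}^{-1}B^{(k)}-I=P\bigl((\Lambda^{(k)})^{-1}R\,\Gamma^{(k)}R^{-1}-I\bigr)P^{-1}$ is not a single similarity transform of $(\Lambda^{(k)})^{-1}\Gamma^{(k)}-I$; the factor $R$ wedged between the two diagonal matrices is exactly the obstacle you flag and then leave unresolved.

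The paper's missing idea is concrete: take the singular value decomposition $PQ^{-1}=U\Sigma V^*$ (in the paper's normalization $PA^{(k)}P^{-1}=D_1^{(k)}$, $QB^{(k)}Q^{-1}=D_2^{(k)}$), use unitary invariance of $\norm{\cdot}_F$ to replace ${D_1^{(k)}}^{-1}PQ^{-1}D_2^{(k)}-PQ^{-1}$ by ${M^{(k)}}^{-1}\Sigma N^{(k)}-\Sigma$ with $M^{(k)}=U^*D_1^{(k)}U$ and $N^{(k)}=V^*D_2^{(k)}V$, and then invoke Lemma \ref{lem1}, $\norm{M\Sigma N-\Sigma}_F\ge\sigma_n\norm{MN-I}_F$, to strip out $\Sigma$ at the cost of $\sigma_n=\sigma_{\min}(PQ^{-1})\ge\bigl(\norm{P^{-1}}\norm{Q}\bigr)^{-1}$; combined with the earlier factor $\norm{P}^{-2}\norm{Q^{-1}}^{-2}$ this yields exactly $\kappa(P)^2\kappa(Q)^2$. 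The tuples $M$ and $N$ are commuting and normal (unitary conjugates of the diagonal tuples) with the same joint eigenvalues $\alpha_j$ and $\beta_j$, so Theorem \ref{th2.1} applied to $M$ and $N$ supplies the permutation $\pi$. Without Lemma \ref{lem1} (or an equivalent such as the entrywise-domination lemma sketched above), your argument has a genuine gap at precisely the step you identified but did not execute.
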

where $\kappa(P) = \norm{P} \norm{P^{-1}}$ be the condition number of $P.$ To prove this theorem, first we need to prove the following lemma which is slightly different from the result proved in \cite[p. 216]{sun2}.

\begin{lemma} \label{lem1}
If $M$ and $N$ are normal matrices and $\Sigma = diag(\sigma_1, \sigma_2, \cdots, \sigma_n )$ with $\sigma_1 \geq \sigma_2 \geq \cdots \geq \sigma_n \geq 0$ then 
\begin{equation*}
\norm{M\Sigma N - \Sigma}_F \geq \sigma_n \norm{MN - I}_F.
\end{equation*}
\end{lemma}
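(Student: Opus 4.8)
\medskip
\noindent\textbf{Proposed approach.} If $\sigma_n=0$ there is nothing to prove, so assume $\sigma_n>0$. The idea is to peel off a scalar multiple of the identity from $\Sigma$: write $\Sigma=\sigma_n I+\Sigma'$, where $\Sigma'=\mathrm{diag}(\sigma_1-\sigma_n,\dots,\sigma_{n-1}-\sigma_n,0)$ is again a diagonal matrix with non-negative, non-increasing entries. Then
\[
M\Sigma N-\Sigma=\sigma_n(MN-I)+(M\Sigma'N-\Sigma'),
\]
so expanding the Frobenius inner product gives
\[
\norm{M\Sigma N-\Sigma}_F^2=\sigma_n^2\,\norm{MN-I}_F^2+2\sigma_n\,\Re\langle MN-I,\,M\Sigma'N-\Sigma'\rangle_F+\norm{M\Sigma'N-\Sigma'}_F^2 .
\]
Thus the lemma reduces to showing that the sum of the last two terms is non-negative; I would in fact aim for the cleaner statement
\[
\Re\langle MN-I,\,M\Sigma'N-\Sigma'\rangle_F\ \ge\ 0 ,
\]
from which the claim follows at once.

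For the cross term I would expand $\langle MN-I,\,M\Sigma'N-\Sigma'\rangle_F=\operatorname{tr}\big[(N^{*}M^{*}-I)(M\Sigma'N-\Sigma')\big]$ and use the normality of $M$ and $N$ together with the cyclicity of the trace, noting in particular that $\operatorname{tr}(N^{*}M^{*}M\Sigma'N)=\operatorname{tr}(N^{*}N\,MM^{*}\Sigma')$. Since $\Sigma'=\sum_{i}(\sigma_i-\sigma_n)\,e_ie_i^{*}$ with all coefficients $\sigma_i-\sigma_n\ge 0$, it suffices to treat each rank-one piece $e_ie_i^{*}$ separately, and a short computation turns the $i$-th piece into
\[
e_i^{*}N^{*}N\,MM^{*}e_i-(NM)_{ii}-\overline{(MN)_{ii}}+1 .
\]
Writing $(MN)_{ii}=\langle M^{*}e_i,\,Ne_i\rangle$ and $(NM)_{ii}=\langle N^{*}e_i,\,Me_i\rangle$ and invoking the normality identities $\norm{Me_i}=\norm{M^{*}e_i}$, $\norm{Ne_i}=\norm{N^{*}e_i}$, the Cauchy--Schwarz inequality bounds both $|(MN)_{ii}|$ and $|(NM)_{ii}|$ by $\norm{Me_i}\,\norm{Ne_i}$. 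Diagonalizing $M$ and $N$ via the spectral theorem then gives an explicit handle on the remaining quadratic term $e_i^{*}N^{*}NMM^{*}e_i$, and the target is to show that it together with the constant $1$ dominates $2\,\norm{Me_i}\,\norm{Ne_i}$, so that the whole $i$-th piece is at least $\big(\norm{Me_i}\,\norm{Ne_i}-1\big)^{2}\ge 0$.

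I expect this last recombination to be the main obstacle: the off-diagonal contributions $(MN)_{ii}$ and $(NM)_{ii}$ have to be balanced against $e_i^{*}N^{*}NMM^{*}e_i$ and the constant $1$, and it is precisely here that the spectral structure of the pair $(M,N)$ enters in an essential way -- for instance, when $M$ and $N$ are unitary the quadratic term is simply $1$ and everything collapses to $|(MN)_{ii}|,\,|(NM)_{ii}|\le 1$, whereas for general normal $M,N$ the estimate is considerably more delicate. An alternative route would be a homotopy argument -- showing that $t\mapsto\norm{M(\sigma_n I+t\Sigma')N-(\sigma_n I+t\Sigma')}_F^{2}$ is non-decreasing on $[0,1]$ by differentiating in $t$ -- but on inspection this reduces to the sign of the very same cross term, so it is not really a shortcut.
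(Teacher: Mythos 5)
Your decomposition $\Sigma=\sigma_nI+\Sigma'$ is exactly the paper's (there $\Omega=\Sigma-\sigma_nI$), but your write-up stops at the decisive point: the non-negativity of the cross term is only stated as a target, and in fact it is false, so no amount of refinement of the Cauchy--Schwarz recombination can close the gap. Take $n=2$,
$M=\begin{bmatrix}0&1\\1&1\end{bmatrix}$, $N=\begin{bmatrix}0&1\\1&-1\end{bmatrix}$ (real symmetric, hence normal) and $\Sigma'=e_1e_1^{*}=\operatorname{diag}(1,0)$. Then $MN-I=\begin{bmatrix}0&-1\\1&-1\end{bmatrix}$, $M\Sigma'N-\Sigma'=\operatorname{diag}(-1,1)$, and $\Re\langle MN-I,\,M\Sigma'N-\Sigma'\rangle_F=-1<0$; equivalently, your $i=1$ rank-one piece evaluates to $e_1^{*}N^{*}NMM^{*}e_1-(NM)_{11}-\overline{(MN)_{11}}+1=0-1-1+1=-1$. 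The same example also kills the specific estimate you hoped for: bounding the piece below by $\bigl(\norm{Me_i}\norm{Ne_i}-1\bigr)^2$ would need $\Re\,e_i^{*}N^{*}NMM^{*}e_i\ge\norm{Me_i}^2\norm{Ne_i}^2$, which here reads $0\ge 1$.

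The obstacle you flagged is therefore genuine and not a missing trick: with the same $M,N$ and $\Sigma=\operatorname{diag}(3,2)$ one gets $\norm{M\Sigma N-\Sigma}_F^2=10<12=\sigma_n^2\norm{MN-I}_F^2$, so the inequality of the lemma itself fails for general normal $M,N$; some extra hypothesis is needed, and your own aside identifies the right special case --- for unitary $M,N$ the quadratic term is $1$, $|(MN)_{ii}|,|(NM)_{ii}|\le1$, every rank-one piece of the cross term is non-negative, and your argument closes. For comparison, the paper's proof starts from the same decomposition but disposes of the cross term by asserting the identity $2\Re\operatorname{trace}\bigl[(M\Omega N-\Omega)^{*}(MN-I)\bigr]=\operatorname{trace}\bigl\{\Omega\bigl[(MN-I)^{*}(MN-I)+(MN-I)(MN-I)^{*}\bigr]\bigr\}$, whose right-hand side is manifestly non-negative; on the example above the left-hand side is $-2$ while the right-hand side is $2$, so that step does not hold either. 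In short: your approach mirrors the paper's, the gap is the unproved (and in general false) positivity of the cross term, and a correct proof along these lines requires strengthening the hypotheses on $M$ and $N$ (e.g.\ unitarity), not a sharper estimate.
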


\begin{proof}
To prove this, set $\Omega = \Sigma - \sigma_n I.$ Clearly the diagonal elements of $\Omega$ are non-negative. Now
\begin{eqnarray*}
&&\norm{M\Sigma N - \Sigma}_F^2 - \sigma_n^2 \norm{MN - I}_F^2\\ && = \norm{M (\Omega + \sigma_n I) N - (\Omega + \sigma_n I)}_F^2 - \sigma_n^2 \norm{MN - I}_F^2\\
&& = \norm{(M \Omega N - \Omega) + \sigma_n(MN - I) }_F^2 - \sigma_n^2 \norm{MN - I}_F^2\\
&& = \norm{(M \Omega N - \Omega)}_F^2 + 2 \sigma_n \Re \left\lbrace trace [(M \Omega N - \Omega)^* (MN - I)] \right\rbrace \\
&& = \norm{(M \Omega N - \Omega)}_F^2 + \sigma_n trace \left\lbrace \Omega [(MN - I)^* (MN - I) + (MN - I) (MN - I)^*] \right\rbrace\\
 & & \geq  0.
\end{eqnarray*}
\end{proof}

\textit{Proof of Theorem \ref{th2.2}}: \quad
Given $A=\ap$ and $B=\bp$ are two $m$-tuple of commuting diagonalizable matrices. Then $\alpha_i = \left( \alpha_i^{(1)}, \alpha_i^{(2)}, \cdots ,\alpha_i^{(m)} \right)$  and $\beta_i = \left( \beta_i^{(1)}, \beta_i^{(2)}, \cdots ,\beta_i^{(m)} \right)$ are joint eigenvalues of $A$ and $B$ respectively since there are two non-singular matrices $P$ and $Q$ such that for $k= 1,2, \cdots, m$ 
\begin{eqnarray*}
P A^{(k)} P^{-1} &=& D_1^{(k)} = diag \left(\alpha_1^{(k)}, \alpha_2^{(k)}, \cdots ,\alpha_n^{(k)} \right),\\
Q B^{(k)} Q^{-1} &=& D_2^{(k)} = diag \left(\beta_1^{(k)}, \beta_2^{(k)}, \cdots ,\beta_n^{(k)} \right).
\end{eqnarray*}

 \begin{eqnarray*}
\mbox{Now}\quad \norm{{A^{(k)}}^{-1}\left(B^{(k)} - A^{(k)} \right)}_F^2 &=& \norm{{A^{(k)}}^{-1}B^{(k)} - I}_F^2 \\ &=& \norm{P^{-1} {D_1^{(k)}}^{-1} P Q^{-1} D_2^{(k)} Q - I}_F^2\\
&\geq & \norm{P}^{-2} \norm{Q^{-1}}^{-2} \norm{{D_1^{(k)}}^{-1} P Q^{-1} D_2^{(k)}  - P Q^{-1}}_F^2.
\end{eqnarray*}
Let $U\Sigma V^*$ be the singular value decomposition of $P Q^{-1}$ and $\sigma_n$ be the smallest diagonal element of $\Sigma,$ then from above relation we have
\begin{eqnarray*}
\norm{{A^{(k)}}^{-1}\left(B^{(k)} - A^{(k)} \right)}_F^2 &\geq& \norm{P}^{-2} \norm{Q^{-1}}^{-2} \norm{{D_1^{(k)}}^{-1} U\Sigma V^* D_2^{(k)}  - U\Sigma V^*}_F^2\\
&\geq & \norm{P}^{-2} \norm{Q^{-1}}^{-2} \norm{ \left( U^*{D_1^{(k)}}^{-1} U \right) \Sigma \left( V^* D_2^{(k)} V \right)  - \Sigma}_F^2\\
&=& \norm{P}^{-2} \norm{Q^{-1}}^{-2} \norm{ {M^{(k)}}^{-1}  \Sigma N^{(k)}  - \Sigma}_F^2
\end{eqnarray*}
where $M^{(k)} = U^*{D_1^{(k)}} U$ and $N^{(k)} = V^* D_2^{(k)} V$ and for each $k$, $M^{(k)}$ and $N^{(k)}$ are normal. Then from Lemma \ref{lem1}
\begin{equation*}
\norm{{A^{(k)}}^{-1}\left(B^{(k)} - A^{(k)} \right)}_F^2 \geq \sigma_n^2 \norm{P}^{-2} \norm{Q^{-1}}^{-2} \norm{ {M^{(k)}}^{-1} N^{(k)}  - I}_F^2.
\end{equation*}
Finally we have
\begin{equation*}
\norm{ {M^{(k)}}^{-1} \left( N^{(k)}  - M^{(k)} \right)}_F^2 \leq \kappa(P)^2 \kappa(Q)^2 \norm{{A^{(k)}}^{-1}\left(B^{(k)} - A^{(k)} \right)}_F^2.
\end{equation*}
Now applying Theorem \ref{th2.1} for the commuting tuples of normal matrices $M= \mmp$ and $N = \left( N^{(1)}, N^{(2)}, \cdots ,N^{(m)} \right)$ from above relation we have the required result.

\begin{remark}
Corollary 5.2 of \cite{siam} is a special case of Theorem \ref{th2.2} for $m=1.$
\end{remark}

\end{document}